
\documentclass[10pt,draft,reqno]{amsart}
     \makeatletter
     \def\section{\@startsection{section}{1}%
     \z@{.7\linespacing\@plus\linespacing}{.5\linespacing}%
     {\bfseries
     \centering
     }}
     \def\@secnumfont{\bfseries}
     \makeatother
\setlength{\textheight}{19.5 cm}
\setlength{\textwidth}{12.5 cm}
\newtheorem{theorem}{Theorem}[section]
\newtheorem{lemma}[theorem]{Lemma}

\theoremstyle{definition}
\newtheorem{definition}[theorem]{Definition}

\theoremstyle{remark}

\numberwithin{equation}{section}
\setcounter{page}{1}

\begin{document}

\title[Stochastic control for BSDEs with semi-Markov chain noises]{Stochastic control for Backward Stochastic Differential Equations with semi-Markov chain noises}

\author{Robert J. Elliott}
\address{Robert J. Elliott: Haskayne School of Business, University of Calgary, Calgary, AB, T2N 1N4, Canada; Business, University of South Australia, Adelaide, Australia 5001}
\email{relliott@ucalgary.ca}

\author[Zhe Yang]{Zhe Yang}
\address{Zhe Yang: Haskayne School of Business, University of Calgary, Calgary, AB, T2N 1N4, Canada}
\email{yangzhezhe@gmail.com}

\subjclass[2010] {Primary 60K15; Secondary 60J10}

\keywords{BSDEs,  discrete-time, Markov chain models, semi-Markov chain models}

\begin{abstract}
In this paper, we extend the results of Elliott and Yang
\cite{elliott3} and discuss the control of a stochastic process for which
the driving noise is provided by a martingale associated with a semi-Markov
Chain. An existence and a comparison theorem are obtained. In our discrete
time setting, adjoint processes are provided by backward stochastic difference
equations. Technical results from partial differential equation theory
to establish a verification theorem are not required.
\end{abstract}

\maketitle

\section{Introduction}

In classical control problems the noise is usually Gaussian. However, in discrete time many basic models employ Markov chains. The canonical representation for a Markov process is as the sum of a predictable process and a martingale. See, for example, Elliott, Aggoun and Moore \cite{Aggoun}. This martingale represents the random noise, which in general is not Gaussian. \\
\indent The usual definition of a Markov process is that, given the present, the future is independent of the past, roughly, that the probability of where the process goes next depends only on where it is now, rather than on its past history. However, it is not often recognized that this property imposes restrictions on the probabilities of how long the process stays in any state. For a finite state Markov process in continuous time its occupation times in any state must be described by an exponential random variable; for such a process in discrete time the occupation times, or sojourn times, must be described by a geometric random variable. Empirically, processes with more general sojourn times are observed. They are called semi-Markov processes.\\
\indent In previous papers, without loss of generality, the state space of a finite state process can be identified with the standard unit vectors in a Euclidean space. In our paper Elliott and Malcolm \cite{elliott} it is shown how, in discrete time, the state space for a semi-Markov process can be identified with unit vectors in countably many copies of a Euclidean space. This provides an explicit   semimartingale representation for the semi-Markov chain and its associated noise process.\\
\indent A novel control problem with this semi martingale noise is discussed in this paper and a characterization of the optimal control provided.\\
\indent The control of diffusion processes has been extensively studied. See, for example, the
books of Elliott \cite{elliott1982} and Yong and Zhou\cite{Yong}. In 1997 El Karoui, Peng and Quenez \cite{Karoui} obtained a duality between linear
stochastic differential equations (SDEs) and linear backward stochastic differential
equations (BSDEs) driven by Brownian motion. That is, the solutions of the BSDEs
were described by solutions of SDEs driven by Brownian motion. They applied this
duality to stochastic control problems. Using duality, Yang and Elliott \cite{yang} in 2020 solved the analogous stochastic control problem for BSDEs and anticipated BSDEs with Markov chain noises.\\
\indent  Elliott and Malcolm \cite{elliott} in 2021 derived
semimartingale dynamics for a semi-Markov chain and gave them in a new
vector form which explicitly exhibits the times at which jump-events occur
and the probabilities of state transitions. A useful result in Elliott and Malcolm \cite{elliott}
is a new vector lattice state-space representation for a general finite-state,
discrete-time semi-Markov chain. On this space the semi-Markov chain and
its occupation times are a Markov process with dynamics described by transition
matrices. Following the framework of Elliott and Malcolm \cite{elliott}, Elliott and Yang \cite{elliott3} discussed BSDEs in a semi-Markov Chain Model and extended the results of Cohen and Elliott \cite{Cohen} to semi-Markov chains. They found the solution $Z_i$ took values in spaces whose dimension is increasing as the time $i$ increases.\\
\indent This paper, using duality, extends the control results of Yang and Elliott \cite{yang} to BSDEs with semi-Markov chain noise.\\
\indent The present paper is structured as follows: Section 2 presents the model and some preliminary results. Section 3 gives results for BSDEs in a semi-Markov chain model. Section 4 gives a comparison theorem and Section 5 presents the stochastic control results which characterize the optional control, for BSDEs with semi-Markov chain noise with the help of duality.

\section{The  Model and Some Preliminary Results}
In this sction we review the definitions and representation of a finite state semi-Markov chain. This generalizes, in a natural way, a Markov chain and is the process discussed in this paper.\\
\indent Consider a discrete time, finite state space semi-Markov chain $X_.=\{X_k,~$ $k=0,1,2,... \}$ defined on a
probability space $(\Omega,\mathcal{F},P)$. If the state space has $N\in\mathbb{N}$ elements, they can be identified with the set of unit vectors $S=\{e_1,e_2\cdots,e_N\}$, where $e_i=(0,\cdots 0,1,0, \cdots,0) '$ $\in
\mathbb{R}^N$ with 1 in the $i$-th position. Suppose the initial state $X_0\in S$ is given, or its probability distribution $p_0=(p_0^1,p_0^2,...,p_0^N)\in[0,1]^N$ is known. The chain will change state at random discrete times $\tau_n$. State transitions at these times are of the type $e_i\rightarrow e_j$, with $i\neq j.$ Set $\tau_0:=0$. Then we know successive jump event times form a strictly increasing sequence, that is, $\tau_0<\tau_1<\tau_2<...$. Set $\mathcal{F}_m:=\sigma\{X_k,~k\leq m\}$.\\
\indent Now we give the definition of a time-homogeneous semi-Markov chain following Elliott and Malcolm \cite{elliott}.
\begin{definition}\label{smc}
The stochastic process $X_.$ is a time-homogeneous semi-Markov chain if
\begin{align*}
&P(X_{\tau_{n+1}}=e_j,\tau_{n+1}-\tau_{n}=m \mid \mathcal{F}_{\tau_n})\\[2mm]
=&P(X_{\tau_{n+1}}=e_j,\tau_{n+1}-\tau_{n}=m \mid X_{\tau_{n}}=e_i).
\end{align*}
We write this as $q(e_j,e_i,m)$.
\end{definition}
Set
\begin{align*}
f_{j,i}(m)&:=P(\tau_{n+1}-\tau_{n}=m \mid X_{\tau_{n+1}}=e_j, X_{\tau_{n}}=e_i)~~~~\text{and}\\[2mm]
p_{j,i}&:=P( X_{\tau_{n+1}}=e_j| X_{\tau_{n}}=e_i).
\end{align*}
Consequently
\begin{align*}
&q(e_j,e_i,m)\\[2mm]
=&P(\tau_{n+1}-\tau_{n}=m \mid X_{\tau_{n+1}}=e_j, X_{\tau_{n}}=e_i)P( X_{\tau_{n+1}}=e_j| X_{\tau_{n}}=e_i)\\[2mm]
=&f_{j,i}(m)p_{j,i}.\end{align*}
We can also consider the factorization: Set
\begin{align*}
\pi_{i}(m)&:=P( \tau_{n+1}-\tau_n=m\mid X_{\tau_{n}}=e_i)~~~~\text{and}\\[2mm]
p_{j,i}(m)&:=P(X_{\tau_{n+1}}=e_j \mid \tau_{n+1}-\tau_{n}=m, X_{\tau_{n}}=e_i).
\end{align*}
Thus,
\begin{align*}
&q(e_j,e_i,m)\\[2mm]
=&P( \tau_{n+1}-\tau_n=m\mid X_{\tau_{n}}=e_i)P(X_{\tau_{n+1}}=e_j \mid \tau_{n+1}-\tau_{n}=m, X_{\tau_{n}}=e_i)\\[2mm]
=&\pi_{i}(m)p_{j,i}(m).
\end{align*}
Write
\begin{align*}
G_{i}(m)&:=P( \tau_{n+1}-\tau_n\leq m\mid X_{\tau_{n}}=e_i)=\sum^m_{l=1}\pi_i(l),\\[2mm]
F_i(m)&:=P( \tau_{n+1}-\tau_n> m\mid X_{\tau_{n}}=e_i)=1-G_{i}(m).
\end{align*}
Denote by $\Delta^i(m)$ the conditional probability for a state-transition to occur at the next discrete time, that is,
$$\Delta^i(m):=\frac{\pi_{i}(m)}{F_i(m-1)}.$$
\begin{definition}\label{cumulative}
For each index $i$, $1 \leq i \leq N$, we define the recursive process
$h^i_k := \langle X_k, e_i\rangle+ \langle X_k, e_i\rangle\langle X_k, X_{k-1}\rangle h^i_{k-1}$, with $h^i
_0 := \langle
X_0, e_i
\rangle \in \{0, 1\}$. Thus, the $h^i_.$
processes are non-zero only at times when $X_.= e_i $. The process $h^i_.$ returns the cumulative time spent in state $e_i$.\\
If $h_k =\sum\limits^{N}\limits_{i=1}h^i_k$, then $h_0 = 1$ and $h_k = 1+\langle X_k, X_{k-1}\rangle h_{k-1}$. The process $h_k$ measures
the amount of time since the last transition event. This process is never zero.
\end{definition}
For $m=1,2,...$, denote $A(m)$ for the $N\times N$ matrix with entries $a_{i,i}(m)=1-\Delta^{i}(m)$ and $a_{j,i}(m)=p_{j,i}(m)\Delta^{i}(m).$\\[2mm]
\indent The following lemma comes from Elliott and Malcolm \cite{elliott}.
\begin{lemma}\label{representation of SMC}
The semi-Markov chain $X_.$ has the following semi-martingale dynamics:
$$
X_{k+1}=A(h_k)X_k+M_{k+1}\in\mathbb{R}^N.
$$
Here $M_{k+1}$ is a martingale increment: $E[M_{k+1}|X_k,h_k]=0\in\mathbb{R}^N.$ Here, $E$ denotes expectation.
\end{lemma}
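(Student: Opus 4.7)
The plan is to take the target relation as the \emph{definition} of $M_{k+1}$, i.e., set $M_{k+1}:=X_{k+1}-A(h_k)X_k$, and then verify the martingale-increment condition by a direct computation. Since $A(h_k)X_k$ is a function of $(X_k,h_k)$, the identity $E[M_{k+1}\mid X_k,h_k]=0$ is equivalent to $E[X_{k+1}\mid X_k,h_k]=A(h_k)X_k$. Because $X_{k+1}$ takes values in the canonical basis $\{e_1,\dots,e_N\}$, I would reduce this to computing the one-step conditional probabilities $P(X_{k+1}=e_j\mid X_k=e_i,h_k=m)$ for each triple $(i,j,m)$ and checking that the resulting column vector agrees with $A(m)e_i$.

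For the main computation, fix $X_k=e_i$ and $h_k=m$ and first unpack what this event means. By Definition \ref{cumulative} it pins down the most recent jump time: there is a unique index $n$ with $X_{\tau_n}=e_i$, $\tau_n=k-m+1$, and $\tau_{n+1}-\tau_n\geq m$. Consequently a transition at time $k+1$ is the event $\{\tau_{n+1}-\tau_n=m\}$, and by Definition \ref{smc} together with the definition of $\Delta^i$,
\begin{align*}
P(X_{k+1}\neq e_i\mid X_k=e_i,h_k=m)
&= P(\tau_{n+1}-\tau_n=m\mid X_{\tau_n}=e_i,\,\tau_{n+1}-\tau_n\geq m)\\
&= \frac{\pi_i(m)}{F_i(m-1)}=\Delta^i(m).
\end{align*}
Conditional on such a transition, the factorization $q(e_j,e_i,m)=\pi_i(m)p_{j,i}(m)$ gives that the next state is $e_j$ with probability $p_{j,i}(m)$ for $j\neq i$. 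Matching with the definition of $A(m)$ yields $P(X_{k+1}=e_i\mid X_k=e_i,h_k=m)=1-\Delta^i(m)=a_{i,i}(m)$ and $P(X_{k+1}=e_j\mid X_k=e_i,h_k=m)=p_{j,i}(m)\Delta^i(m)=a_{j,i}(m)$ for $j\neq i$, i.e., the conditional law of $X_{k+1}$ is exactly the $i$-th column $A(m)e_i$. Expanding $X_k=\sum_i\langle X_k,e_i\rangle e_i$ then assembles these into $E[X_{k+1}\mid X_k,h_k]=A(h_k)X_k$, as required.

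The step I expect to need the most care, modest as it is, is the translation between the jump-epoch formulation of Definition \ref{smc}, whose data $\pi_i,F_i,p_{j,i}(\cdot)$ are indexed by transitions, and the calendar-time conditioning required by the lemma. The bridge is precisely the clock $h_k$: the event $\{X_k=e_i,h_k=m\}$ carries exactly the same information as $\{X_{\tau_n}=e_i,\,\tau_{n+1}-\tau_n\geq m\}$ for the appropriate $n$, and the hazard $\Delta^i(m)$ is by construction the one-step transition probability under this conditioning. Once this identification is made, the remainder is a direct unpacking of definitions; it is also the reason the drift matrix must depend on $h_k$ rather than collapsing, as in the plain Markov case, to a single stochastic matrix.
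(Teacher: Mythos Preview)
Your argument is correct: defining $M_{k+1}:=X_{k+1}-A(h_k)X_k$ and computing $P(X_{k+1}=e_j\mid X_k=e_i,h_k=m)$ via the hazard $\Delta^i(m)$ and the conditional transition probabilities $p_{j,i}(m)$ recovers the columns of $A(m)$, which is exactly what is needed. The paper itself does not supply a proof of this lemma but simply quotes it from Elliott and Malcolm \cite{elliott}; your direct computation is the natural verification and matches what that reference does.
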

Now recall Section 3 of Elliott and Malcolm \cite{elliott}. This provides a representation of the process $(X_.,h_.)$ as a Markov chain on a lattice. \\
\indent The state of the chain $X_k\in\{e_1,e_2,...,e_N\}$ and the number of time steps $h_k$ describe the semi-Markov chain $X_.$. Then a state space $\bar{S}$ for the chain $\bar{X}_k:=(X_k,h_k),~k\in\{1,2,...\}$ can be identified with countably many copies of $S$ as follows: Elements of $\bar{S}$ can be thought of as infinite column vectors:
\begin{align*}
(e_{l,i})~~~\mbox{corresponds~to}~~~(0,0,...,0|\cdot\cdot~\cdot&|\underbrace{0,...,1,...,0}|\cdot\cdot\cdot|0,0,...,0|\cdot\cdot~\cdot)',\\
&\quad\quad h_k=i
\end{align*}
with $e_l=(0,...,1,...,0)'$ in the $i^{th}$ block, where $l=1,2,...,N,$ and $i=0,1,...$. As a basis of unit vectors for the process $\bar{X}_.=(X_.,h_.)$, we take the unit vectors $e_{l,i}$ with $l=1,2,...,N,$ and $i=0,1,...$. Here $l$ denotes the state $e_l$ in
$\{e_1,e_2,...,e_N\}$ and the $i$ corresponds to the sojourn time in the state $e_l$ since the last jump. Write $\bar{S}=\{e_{l,i};~l=1,2,...,N,$ and $i=0,1,...\}$ with $e_{l,i}$ representing $e_{l}$ in the $i^{th}$ block. For any $m\in\{1,2,...\}$, write
\begin{align*}
\Pi(m)  =
\begin{pmatrix}
0&p_{1,2}(m)\Delta^2(m)&\cdot\cdot\cdot&p_{1,N}(m)\Delta^N(m)\\[2mm]
p_{2,1}(m)\Delta^1(m)&0&\cdot\cdot\cdot&p_{2,N}(m)\Delta^N(m)\\[2mm]
\cdot\cdot\cdot\\[2mm]
p_{N,1}(m)\Delta^1(m)&p_{N,2}(m)\Delta^2(m)&\cdot\cdot\cdot&0
\end{pmatrix}
\end{align*}
and $D(m)=$\text{diag}$\{1-\Delta^1(m),1-\Delta^2(m),...,1-\Delta^N(m)\}$. With $0$ representing the $N\times N$ zero matrix write
\begin{align*}
C  =
\begin{pmatrix}
\Pi(1)&\Pi(2)&\Pi(3)&\cdot\cdot\cdot&\Pi(T)&\cdot\cdot\cdot\\[2mm]
D(1)&0&0&\cdot\cdot\cdot&0&\cdot\cdot\cdot\\[2mm]
0&D(2)&0&\cdot\cdot\cdot&0&\cdot\cdot\cdot\\[2mm]
\cdot\cdot\cdot\\[2mm]
0&0&0&\cdot\cdot\cdot&D(T)&\cdot\cdot\cdot\\[2mm]
\cdot\cdot\cdot
\end{pmatrix}.
\end{align*}
Write the enlarged vectors as $\bar{X}_i.$ Then, following Elliott and Malcolm \cite{elliott}, the semi-martingale dynamics of Markov chain can be written
$$\bar{X}_{i+1}=C\bar{X}_i+\bar{M}_{i+1}\in \bar{S}$$
with $E[\bar{M}_{i+1}|\bar{X}_i]=E[\bar{X}_{i+1}|\bar{X}_i]-C\bar{X}_i=0$. When the  time horizon is $T<+\infty$,
the matrix $C$ is finite, and the state space of $\bar{X}_.$ at time $i$ only has $(i+1)N$ elements.
\begin{definition}\label{seminorm}
For any integer $K$, and any $i\in\{0,1,...,T-1\},$ we shall denote by $\|\cdot\|_{\bar{M}_{i+1}}$ the seminorm on
the space of $\sigma(\bar{X}_{i})$-measurable and $\mathbb{R}^{K\times((i+1)N)}$-valued random variables $Z_i$, given by
\begin{align*}
\|Z_i\|^2_{\bar{M}_{i+1}} :&= E\text{Tr}
[\sum_{0\leq u\leq i}Z_{u}\cdot E[\bar{M}_{u+1}\bar{M}'_{u+1}|\bar{X}_u]\cdot Z'_u]\\
&=\sum_{0\leq u\leq i}\text{Tr}  E[(Z_u\bar{M}_{u+1})(Z_u\bar{M}_{u+1})'
].
\end{align*}
Here, Tr denotes the trace of a matrix.
\end{definition}

\begin{lemma}\label{equivalent} For any $i\in\{0,1,...,T-1\}$, the following statements are equivalent:\\[2mm]
$(i)~\|Z_i^{(1)}-Z_i^{(2)}\|^2_{\bar{M}_{i+1}}=0.$\\[2mm]
$(ii)~E\text{Tr} [(Z_i^{(1)}-Z_i^{(2)})\bar{M}_{i+1}\bar{M}'_{i+1}(Z_i^{(1)}-Z_i^{(2)})']=0.$\\[2mm]
$(iii)~Z_i^{(1)}\bar{M}_{i+1}=Z_i^{(2)}\bar{M}_{i+1},$ $\mathbb{P}$-a.s.\\[2mm]
$(iv)~\sum\limits_{0\leq u\leq i}Z_u^{(1)}\bar{M}_{u+1}=\sum\limits_{0\leq u\leq i}Z_u^{(2)}\bar{M}_{u+1},$ $\mathbb{P}$-a.s.\\[2mm]
In this case we shall write $Z_i^{(1)}\sim_{\bar{M}_{i+1}}Z_i^{(2)}$.
\end{lemma}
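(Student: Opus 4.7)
The plan is to reduce all four statements to the common atomic condition that $(Z_u^{(1)}-Z_u^{(2)})\bar{M}_{u+1}=0$ $\mathbb{P}$-a.s., writing $\Delta Z_u:=Z_u^{(1)}-Z_u^{(2)}$ for brevity. The only tools needed are positivity of the Frobenius norm, $\text{Tr}(AA')=\|A\|_F^2\geq 0$ with equality iff $A=0$, and orthogonality of the martingale increments, which follows from $E[\bar{M}_{v+1}\mid\bar{X}_v]=0$ combined with $\sigma(\bar{X}_u)\subset\sigma(\bar{X}_v)$ for $u<v$.

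First I would treat (ii) $\Leftrightarrow$ (iii) directly: with $A:=\Delta Z_i\bar{M}_{i+1}$, the left side of (ii) equals $E\|A\|_F^2$, which vanishes iff $A=0$ a.s. For (i) $\Leftrightarrow$ (ii) I would invoke the tower-property form of the seminorm from Definition \ref{seminorm},
\begin{equation*}
\|\Delta Z_i\|^2_{\bar{M}_{i+1}}=\sum_{u\leq i}E\|\Delta Z_u\bar{M}_{u+1}\|_F^2,
\end{equation*}
a sum of non-negative terms; its vanishing is equivalent to $\Delta Z_u\bar{M}_{u+1}=0$ a.s.\ at every $u\leq i$, which at $u=i$ is the content of (ii). For (iii) $\Leftrightarrow$ (iv), the cross-terms $E[(\Delta Z_u\bar{M}_{u+1})(\Delta Z_v\bar{M}_{v+1})']$ with $u<v$ vanish after conditioning on $\bar{X}_v$ and pulling out the $\sigma(\bar{X}_v)$-measurable factors, giving the Pythagorean identity
\begin{equation*}
E\Bigl\|\sum_{u\leq i}\Delta Z_u\bar{M}_{u+1}\Bigr\|_F^2=\sum_{u\leq i}E\|\Delta Z_u\bar{M}_{u+1}\|_F^2,
\end{equation*}
so vanishing of the cumulative sum in (iv) a.s.\ (equivalently, in $L^2$) is the same as vanishing of each individual summand.

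The main obstacle I anticipate is purely notational: (iii) is written at the single index $i$, whereas the implications naturally chain through the per-$u$ statement for all $u\leq i$. I would resolve this by reading (iii), like (i) and (iv), at the process level, so that $Z_i^{(1)}\sim_{\bar{M}_{i+1}}Z_i^{(2)}$ identifies two processes $(Z_u)_{u\leq i}$ whose contractions with the noise coincide. With this convention the four clauses become four equivalent bookkeepings of the same fact, and no deeper analytic input is required beyond the two observations above.
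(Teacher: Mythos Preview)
Your argument is essentially correct and in fact goes further than the paper, which does not prove this lemma at all but simply refers the reader to Lemma~3.2 of Elliott and Yang~\cite{elliott3}. Your reduction of all four statements to the atomic condition $(\Delta Z_u)\bar{M}_{u+1}=0$ a.s.\ via nonnegativity of the Frobenius norm and orthogonality of martingale increments is the natural route, and your diagnosis of the notational tension between the single-index formulation of (ii)--(iii) and the process-level content of (i) and (iv) is apt and worth flagging.

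One small correction: the inclusion $\sigma(\bar{X}_u)\subset\sigma(\bar{X}_v)$ for $u<v$ is not true in general, since these are sigma-algebras generated by single coordinates of the chain, and $\bar{X}_v$ does not determine the earlier states. What you actually need is the natural filtration $\mathcal{F}_v=\sigma(\bar{X}_0,\ldots,\bar{X}_v)$. The Markov property of $\bar{X}$ gives $E[\bar{M}_{v+1}\mid\mathcal{F}_v]=E[\bar{M}_{v+1}\mid\bar{X}_v]=0$, and for $u<v$ the factors $\Delta Z_u$, $\bar{M}_{u+1}$ and $\Delta Z_v$ are all $\mathcal{F}_v$-measurable, so the cross terms vanish and your Pythagorean identity holds exactly as written. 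With this cosmetic fix the rest of the argument goes through unchanged.
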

Proof. See Lemma 3.2 in Elliott and Yang \cite{elliott3}.
\begin{definition}\label{equivalence}
For any two $\sigma(\bar{X}_{i-1})$-measurable random variables $Z_{i-1}^{(1)}$ and $Z_{i-1}^{(2)}$, we
shall write $Z_{i-1}^{(1)}\sim _{\bar{M}_i} Z_{i-1}^{(2)}$
if $Z_{i-1}^{(1)}\bar{M}_i = Z_{i-1}^{(2)}\bar{M}_i,$ $\mathbb{P}$-a.s.
\end{definition}

\section{Duality Results}
This section gives some results fo BSDEs. For $n\in\mathbb{N}$, and $\phi \in \mathbb{R}^n$, denote the Euclidean norm $|\phi|_{n}=\sqrt{\phi'\phi}$, and for $\psi \in \mathbb{R}^{n\times n}$, the matrix norm $\|\psi\|_{n\times n}=\sqrt{Tr(\psi'\psi)}$. Set
\begin{equation}\label{Psi}
\Psi_k:=\text{diag}(C\bar{X}_k)- \text{diag}(\bar{X}_k)C' - C \text{diag}(\bar{X}_k).
\end{equation}
Define the semi-norm $\|.\|_{\bar{X}_k}$, for
$C\in \mathbb{R}^{N(k+1)\times 1}$ as :
\begin{align*}
\|C\|^2_{\bar{X}_k} & = Tr(C' \Psi_kC).
\end{align*}
See Campbell and Meyer \cite{campbell} for the following definition:
\begin{definition}[Moore-Penrose pseudoinverse]\label{defMoore}
The Moore-Penrose pseudoinverse of a square matrix $Q$ is the matrix $Q^{\dagger}$ satisfying the properties:\\[2mm]
  1) $QQ^{\dagger}Q = Q$ \\[2mm]
  2) $Q^{\dagger}QQ^{\dagger} = Q^{\dagger}$ \\[2mm]
  3) $(QQ^{\dagger})' = QQ^{\dagger}$ \\[2mm]
  4) $(Q^{\dagger}Q)'=Q^{\dagger}Q.$
\end{definition}
\indent Although the dimension of $\bar{M}_i$ is increasing with $i$, the duality between the solutions to linear BSDEs
and linear SDEs, adapted to a one-dimensional time-discrete case with semi-Markov
chain noise, still holds by Theorem 2 of Cohen and Elliott \cite{Sam2} and Theorem 3.2 of Yang and Elliott \cite{yang}. The following four lemmas are proved in Yang and Elliott \cite{yang}.
\begin{lemma}\label{Duality1} (Duality) Let $(\alpha_.,\beta_.,g_.)$ be a $du\times P$-a.s. bounded $(\mathbb{R},$ $\mathbb{R}^{1\times (N(i+1))},$ $\mathbb{R})$-valued
adapted process for any $i\in\{0,1,...,T-1\},$ and $\xi$ be an essentially bounded, valued in $\mathbb{R}$, $\sigma(\bar{X}_T)$ -measurable random variable. $\Psi_.$ is given by equation (\ref{Psi}). Then the linear BSDE given by
\begin{align*}
\begin{cases}
Y_i= \xi + \sum\limits_{k=i}^{T-1} (\alpha_kY_k+ \beta_k \Psi_k^{\dagger}\Psi_kZ'_k+g_k)  -  \sum\limits_{k=i}^{T-1}Z_{k} \bar{M}_{k+1},\\
~~~~~~~~~~~~~~~~~~~~~~~~~~~~~~~~~~~~~~~~~~~~i\in\{0,1,...,T-1\};\\
Y_T= \xi
\end{cases}
\end{align*}
has an adapted solution $(Y_., Z_.)$. Here for any $i\in\{0,1,...,T-1\}$, we have $(Y_i,Z_i)$ is a $\sigma(\bar{X}_{i})$-measurable and $\mathbb{R}\times\mathbb{R}^{1\times((i+1)N)}$-valued random variable, (up to appropriate sets of
measure zero). Moreover, $Y_.$ is given by the explicit
formula
\begin{align*}
Y_i = E[\xi V_T +\sum_{k=i}^{T-1}g_kV_k|\bar{X}_i], ~~i\in\{0,1,...,T-1\}
\end{align*}
up to indistinguishability. Further, $V_.$ is the solution of the following 1-dimensional SDE:
\begin{equation}\label{Duality}
\begin{cases}
V_m=1+ \sum\limits_{k=i+1}^{m}\alpha_k V_k + \sum\limits_{k=i+1}^{m}V_{k}\beta_k\Psi_k^{\dagger}\Psi_k(\Psi_k^{\dagger})'\bar{M}_k,~~  m \in \{i+1,...,T\};\\[2mm]
V_i =1.
\end{cases}
\end{equation}
\end{lemma}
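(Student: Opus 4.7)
The plan is: first solve the forward SDE (\ref{Duality}) for $V_.$, then define a candidate $Y_i$ from the proposed explicit formula, extract a candidate $Z_i$ by a discrete martingale representation with respect to $\bar{M}_{i+1}$, and finally verify that the pair solves the BSDE via a discrete integration-by-parts applied to $Y_k V_k$. Rewriting (\ref{Duality}) as $V_{k+1}\bigl(1-\alpha_{k+1}-\beta_{k+1}\Psi_{k+1}^{\dagger}\Psi_{k+1}(\Psi_{k+1}^{\dagger})'\bar{M}_{k+1}\bigr)=V_k$ and using that the coefficients are essentially bounded while $\bar{M}_{k+1}$ takes only finitely many values given $\bar{X}_k$, the bracketed factor is a.s.\ invertible and one obtains recursively a unique, bounded, $\sigma(\bar{X}_.)$-adapted solution $V_.$ on $\{i,\ldots,T\}$ with $V_i=1$.

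Setting $\tilde Y_i := E\bigl[\xi V_T + \sum_{k=i}^{T-1} g_k V_k\bigm|\bar X_i\bigr]$ yields a bounded $\sigma(\bar X_i)$-measurable scalar random variable with $\tilde Y_T=\xi$. Because $\tilde Y_{i+1}$ is $\sigma(\bar X_{i+1})$-measurable and $\bar X_{i+1}=C\bar X_i+\bar M_{i+1}$ with $E[\bar M_{i+1}\mid\bar X_i]=0$, the centred variable $\tilde Y_{i+1}-E[\tilde Y_{i+1}\mid\bar X_i]$ is a linear functional of $\bar M_{i+1}$ with $\sigma(\bar X_i)$-measurable coefficients; this supplies a $Z_i$ (unique modulo $\sim_{\bar M_{i+1}}$ by Lemma \ref{equivalent}) such that $\tilde Y_{i+1}=E[\tilde Y_{i+1}\mid\bar X_i]+Z_i\bar M_{i+1}$. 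The BSDE then reduces to the identity $\tilde Y_i=E[\tilde Y_{i+1}\mid\bar X_i]+\alpha_i\tilde Y_i+\beta_i\Psi_i^{\dagger}\Psi_i Z_i'+g_i$, which I would verify by the discrete product rule $\tilde Y_{k+1}V_{k+1}-\tilde Y_k V_k=\tilde Y_k(V_{k+1}-V_k)+V_{k+1}(\tilde Y_{k+1}-\tilde Y_k)$, summed from $k=i$ to $T-1$ and followed by $E[\cdot\mid\bar X_i]$ together with a comparison against the defining formula for $\tilde Y_i$.

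The main obstacle is the singularity of $\Psi_k=E[\bar M_{k+1}\bar M'_{k+1}\mid\bar X_k]$: the coordinates of $\bar X_k$ sum to one, so $\bar M_{k+1}$ takes values in a proper subspace, and $Z_i$ is only determined modulo $\sim_{\bar M_{i+1}}$. Handling it rests on the Moore--Penrose identities of Definition \ref{defMoore}: $\Psi_k$ is symmetric (it is a conditional covariance), so $\Psi_k^{\dagger}$ is symmetric and $\Psi_k^{\dagger}\Psi_k(\Psi_k^{\dagger})'=\Psi_k^{\dagger}$. Inserting the projector $\Psi_k^{\dagger}\Psi_k$ in the BSDE drift and precisely this combination in the volatility of $V$ is what makes the mixed martingale cross-term in the product rule collapse, via $\Psi_k\Psi_k^{\dagger}\Psi_k=\Psi_k$, to the extra drift carried by $\tilde Y$, and simultaneously ensures that the whole identity is invariant under the equivalence class of $Z_k$. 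Once this cancellation is checked, $(Y_.,Z_.):=(\tilde Y_.,Z_.)$ is the desired adapted solution with the claimed explicit formula for $Y_.$.
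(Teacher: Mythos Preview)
The paper does not give its own proof of this lemma; it simply records ``See Theorem 3.2 of Yang and Elliott \cite{yang}.'' Your sketch reproduces the classical duality argument that underlies that cited result: construct $V_.$ from the forward SDE, define $\tilde Y_i$ by the claimed conditional-expectation formula, obtain $Z_i$ from a one-step martingale representation on the finite state space, and close the loop with a discrete summation-by-parts on $\tilde Y_kV_k$, using the Moore--Penrose identities $\Psi_k\Psi_k^{\dagger}\Psi_k=\Psi_k$ and $\Psi_k^{\dagger}\Psi_k(\Psi_k^{\dagger})'=\Psi_k^{\dagger}$ to collapse the cross term. In that sense your route and the paper's (delegated) route coincide.

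One step in your sketch is not justified as stated. You assert that the factor $1-\alpha_{k+1}-\beta_{k+1}\Psi_{k+1}^{\dagger}\Psi_{k+1}(\Psi_{k+1}^{\dagger})'\bar M_{k+1}$ is a.s.\ invertible ``using that the coefficients are essentially bounded while $\bar M_{k+1}$ takes only finitely many values''. Boundedness and finite range alone do not force this scalar away from zero (e.g.\ $\alpha\equiv 1$, $\beta\equiv 0$), and the SDE \eqref{Duality} is implicit in $V_m$, so forward solvability genuinely needs either a smallness condition on $(\alpha,\beta)$ or the specific algebraic structure exploited in \cite{yang}. You should also keep track of the index shift between the SDE increment (which at step $k\to k+1$ carries $\alpha_{k+1},\beta_{k+1},\bar M_{k+1}$) and the BSDE increment (which carries $\alpha_k,\beta_k,\bar M_{k+1}$) when you expand the product rule; the cancellation you describe is correct once the bookkeeping is done, but it is not immediate from the form you wrote.
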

Proof. See Theorem 3.2 of Yang and Elliott \cite{yang}.
\begin{lemma}\label{sdebound}
We make the same assumptions as in Lemma \ref{Duality1}. Then we know there exists a constant $C>0$ such that the solution $V_.$ of SDE (\ref{Duality}) satisfies $E[\max\limits_{i=0,1,...,T}|V_i|^2]<C$.
\end{lemma}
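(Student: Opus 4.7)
The plan is to solve the one-step recursion coming from (\ref{Duality}) explicitly, observe that each multiplicative factor is essentially bounded thanks to the finite state-space of $\bar X$ up to the horizon $T<\infty$, and then bound the resulting product deterministically.

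First, I would derive a one-step recursion by taking the difference of (\ref{Duality}) at times $m$ and $m-1$, which gives
$V_m = V_{m-1} + \alpha_m V_m + V_m\,\beta_m\, \Psi_m^{\dagger}\Psi_m(\Psi_m^{\dagger})'\,\bar M_m$
and can be solved for $V_m$ as $V_m = V_{m-1}/\gamma_m$ with
$\gamma_m := 1 - \alpha_m - \beta_m \Psi_m^{\dagger}\Psi_m(\Psi_m^{\dagger})'\bar M_m$.
Iterating from the initial condition $V_i = 1$ then expresses $V_m = \prod_{k=i+1}^{m} \gamma_k^{-1}$.

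Second, I would argue that each $\gamma_k^{-1}$ is essentially bounded. For $T<\infty$, the state space $\bar S$ at time $k\le T$ is finite, with at most $(T+1)N$ elements. Consequently $\bar M_k$ and the matrix $\Psi_k^{\dagger}\Psi_k(\Psi_k^{\dagger})'$ each take values in a finite set, and together with the essential boundedness of $\alpha,\beta$ this implies that $\gamma_k$ takes only finitely many values. Since Lemma \ref{Duality1} provides existence of $V_.$, every realised value of $\gamma_k$ must be nonzero, and finiteness promotes this to a uniform lower bound; hence $|\gamma_k^{-1}|\le K$ a.s.\ for some constant $K>0$ depending only on $T$ and the given bounds on $(\alpha,\beta)$. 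Therefore $|V_m|\le K^{m-i}\le K^T$ almost surely, so $\max_{0\le m\le T}|V_m|^2 \le K^{2T}$ almost surely, which yields the conclusion with $C := K^{2T}$.

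The main obstacle is showing $\gamma_k$ is uniformly bounded away from zero: an a.s.\ positivity statement alone would not suffice for a uniform $L^2$ bound if $\gamma_k$ could take arbitrarily small values on some paths. This obstruction is overcome precisely by the finite-horizon hypothesis, which makes the state space of $\bar X$ up to time $T$ finite and therefore restricts $\gamma_k$ to finitely many (nonzero) values. In a continuous-state or infinite-horizon setting one would instead need an $L^2$ estimate via a discrete Gronwall inequality applied to $E|V_m|^2$, together with Doob's maximal inequality on the martingale part of $V_.$ to pass from pointwise $L^2$ control to control of the maximum.
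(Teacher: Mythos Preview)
The paper does not prove this lemma itself; it simply cites Lemma~3.3 of Yang and Elliott~\cite{yang}. Your product-formula argument is therefore not the paper's route but a legitimate and more elementary alternative that exploits the finite-state, finite-horizon structure, whereas the $L^2$/Gronwall/Doob approach you sketch at the end is closer in spirit to what one expects in the cited reference and has the advantage of not relying on finiteness of the state space.

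That said, there is a real flaw in one step of your justification. You claim that the essential boundedness of $\alpha,\beta$, together with the finite range of $\bar M_k$ and of $\Psi_k^{\dagger}\Psi_k(\Psi_k^{\dagger})'$, forces $\gamma_k$ to take only finitely many values. This implication is false: a bounded random variable can have continuum range, so $1-\alpha_k-(\text{finite-range term})$ need not be finitely valued on the basis of boundedness alone. The correct reason your conclusion holds in this setting is that $\alpha_k,\beta_k$ are adapted to $\mathcal{F}_k=\sigma(X_0,\dots,X_k)$, and since the chain is finite-state and $k\le T<\infty$, this $\sigma$-algebra is finite; hence every $\mathcal{F}_k$-measurable random variable, in particular $\alpha_k$, $\beta_k$ and therefore $\gamma_k$, has finite essential range. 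With this correction the remainder of your argument is sound: finite essential range combined with $\gamma_k\ne 0$ a.s.\ (which follows by induction from $V_i=1\ne 0$ and the existence of $V$ asserted in Lemma~\ref{Duality1}) yields a uniform bound $|\gamma_k^{-1}|\le K$ a.s., and the deterministic bound on $\max_m|V_m|$ follows.
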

Proof. See Lemma 3.3 of Yang and Elliott \cite{yang}.
\begin{lemma}\label{sde0} We make the same assumptions as in Lemma \ref{Duality1}. Denote
the bounds of $|\alpha_.|$ and $|\beta_.|_N$ by $p$ and $l$, respectively. If the constant $l$ satisfies for any $i\in\{0,1,...,T\},$
\begin{equation}\label{lconstant}
\sqrt{2}l\|\Psi_i\|_{N\times N} \|\Psi^{\dagger}_i\|_{N\times N}^2\leq 1,
\end{equation}
 then we know the solution $V_.$ of SDE (\ref{Duality}) satisfies for any $i\in\{0,1,...,T\}$, $V_i\geq 0.$
\end{lemma}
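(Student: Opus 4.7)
The plan is a forward induction on $m\in\{i,\ldots,T\}$; the base case is immediate since $V_i=1>0$, so the content lies in the inductive step. I would convert the integral equation (\ref{Duality}) into a one-step recursion by subtracting the identity at time $m-1$ from the one at time $m$, giving
\[
V_m \;=\; V_{m-1} + \alpha_m V_m + V_m\,\beta_m\Psi_m^{\dagger}\Psi_m(\Psi_m^{\dagger})'\bar{M}_m,
\]
an implicit scalar equation in $V_m$. Solving for $V_m$ expresses it as $V_{m-1}$ divided by $1-\alpha_m-\beta_m\Psi_m^{\dagger}\Psi_m(\Psi_m^{\dagger})'\bar{M}_m$, so the question reduces to showing that this scalar denominator is strictly positive.

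The central pathwise estimate is a bound on the stochastic correction term. By sub-multiplicativity of the Frobenius norm and $\|(\Psi_m^{\dagger})'\|_{N\times N}=\|\Psi_m^{\dagger}\|_{N\times N}$, one has $\|\Psi_m^{\dagger}\Psi_m(\Psi_m^{\dagger})'\|_{N\times N}\leq\|\Psi_m\|_{N\times N}\|\Psi_m^{\dagger}\|_{N\times N}^{2}$. For the martingale increment $\bar{M}_m=\bar{X}_m-C\bar{X}_{m-1}$, I would expand
\[
|\bar{M}_m|^{2} \;=\; |\bar{X}_m|^{2} - 2\,\bar{X}_m\cdot C\bar{X}_{m-1} + |C\bar{X}_{m-1}|^{2},
\]
and exploit that $\bar{X}_m$ is a unit vector ($|\bar{X}_m|=1$), that $C\bar{X}_{m-1}$ is a probability vector so $|C\bar{X}_{m-1}|\leq 1$, and that the coordinatewise non-negativity of both vectors forces $\bar{X}_m\cdot C\bar{X}_{m-1}\geq 0$; dropping this non-negative inner product yields the sharp bound $|\bar{M}_m|\leq\sqrt{2}$. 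Together with $|\beta_m|_{N}\leq l$, hypothesis (\ref{lconstant}) then gives $|\beta_m\Psi_m^{\dagger}\Psi_m(\Psi_m^{\dagger})'\bar{M}_m|\leq 1$.

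Combining this pathwise bound with the drift bound $|\alpha_m|\leq p$ and the inductive hypothesis $V_{m-1}\geq 0$, the scalar multiplier obtained from solving the implicit one-step recursion comes out non-negative, so $V_m\geq 0$, closing the induction. The step I expect to be the main obstacle is the handling of this implicit equation: one needs the denominator $1-\alpha_m-\beta_m\Psi_m^{\dagger}\Psi_m(\Psi_m^{\dagger})'\bar{M}_m$ to be strictly bounded away from zero so that the division is legitimate and the resulting multiplier inherits a positive sign. It is here that the sharp $\sqrt{2}$-constant in (\ref{lconstant}) is used crucially; the triangle-inequality estimate $|\bar{M}_m|\leq 2$ would be insufficient to close the argument, and one must exploit the coordinatewise structure of the enlarged state space $\bar{S}$ to obtain the optimal bound.
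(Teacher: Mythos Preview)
The paper does not give a self-contained proof of this lemma; it simply defers to Lemma~3.5 of Yang and Elliott~\cite{yang}. So there is no in-paper argument to compare against beyond that citation.

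Your inductive skeleton is the right one, and your derivation of the pathwise bound $|\bar{M}_m|\leq\sqrt{2}$ from the lattice structure of $\bar{S}$ is correct and does explain the $\sqrt{2}$ in~(\ref{lconstant}). The gap is precisely where you flag it, and under the hypotheses \emph{as recorded in this paper} it cannot be closed. Condition~(\ref{lconstant}) together with your estimates yields only
\[
\bigl|\beta_m\Psi_m^{\dagger}\Psi_m(\Psi_m^{\dagger})'\bar{M}_m\bigr|\;\leq\;1,
\]
with equality permitted, while $\alpha_m$ is merely bounded in absolute value by some $p>0$ carrying no sign or smallness restriction. The denominator $1-\alpha_m-\beta_m\Psi_m^{\dagger}\Psi_m(\Psi_m^{\dagger})'\bar{M}_m$ therefore ranges a~priori over an interval containing zero and negative values. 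Concretely, take $\beta_m\equiv 0$ (so~(\ref{lconstant}) holds trivially for any $l$) and $\alpha_m\equiv 2$: the one-step recursion collapses to $V_m=-V_{m-1}$, and positivity fails at the very first step after $V_i=1$.

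So the obstacle you anticipate is not a technicality to be finessed with a sharper constant; it is a genuine missing hypothesis. The source~\cite{yang} presumably carries an additional smallness or sign condition on $\alpha$ (or formulates the forward equation explicitly in $V_{k-1}$ rather than implicitly in $V_k$) that has not been transcribed into the statement here. Your argument will go through once that extra control on the drift is supplied, but not before.
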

Proof. See Lemma 3.5 of Yang and Elliott \cite{yang}.
\begin{lemma}\label{normbound1} For any $k=0,1,...,T-1$,
there exists a constant $\lambda_k>0$ such that for any $B\in\mathbb{R}^{N(k+1)}$,
$$|\Psi_k^{\dagger}\Psi_kB|_{N(k+1)}\leq\lambda_k\|B\|_{\bar{X}_k}.$$
\end{lemma}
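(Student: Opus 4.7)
The plan is to leverage the finite-dimensionality of the problem at time $k$. Since $\Psi_k$ is a function of $\bar{X}_k$ alone by equation~(\ref{Psi}), and $\bar{X}_k$ takes at most $(k+1)N$ distinct values in the lattice $\bar{S}$ (the horizon being $T<\infty$), the random matrix $\Psi_k$ realizes only finitely many deterministic matrices $\Psi^{(1)},\dots,\Psi^{(r_k)}$. It therefore suffices to produce, for each realization $\Psi^{(j)}$, a finite constant $\lambda^{(j)}>0$ satisfying
$$|(\Psi^{(j)})^{\dagger}\Psi^{(j)}B|_{N(k+1)}\leq \lambda^{(j)}\,(B'\Psi^{(j)}B)^{1/2}\quad\forall B\in\mathbb{R}^{N(k+1)},$$
after which $\lambda_k:=\max_{1\leq j\leq r_k}\lambda^{(j)}$ works uniformly.

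Fix such a deterministic $\Psi$ and set $P:=\Psi^{\dagger}\Psi$. Property~(4) of Definition~\ref{defMoore} gives $P'=P$, and property~(1) gives $P^2=\Psi^{\dagger}(\Psi\Psi^{\dagger}\Psi)=\Psi^{\dagger}\Psi=P$, so $P$ is an orthogonal projection with $\mathrm{Range}(P)=\mathrm{Range}(\Psi')$ and $\ker P=\ker \Psi$. Hence
$$|PB|^2_{N(k+1)}=B'P'PB=B'PB,$$
and the desired estimate becomes the comparison of two non-negative quadratic forms
$$B'PB\leq (\lambda^{(j)})^2\,B'\Psi B \quad \text{on } \mathbb{R}^{N(k+1)}.$$

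On a finite-dimensional space, two such forms are comparable as soon as the kernel of the dominating one contains the kernel of the dominated one. Assuming each realization $\Psi^{(j)}$ is symmetric and positive semi-definite (a necessary condition for $\|\cdot\|_{\bar{X}_k}$ to be a bona fide seminorm), decompose $B=B_1+B_2$ with $B_1\in\mathrm{Range}(\Psi)=(\ker \Psi)^{\perp}$ and $B_2\in\ker \Psi$. Then $PB=B_1$, and, using symmetry of $\Psi$, $B'\Psi B=B_1'\Psi B_1$. The task reduces to $|B_1|^2\leq \lambda^2\,B_1'\Psi B_1$ on $\mathrm{Range}(\Psi)$, where $\Psi$ is symmetric positive definite; the optimal constant is
$$\lambda^{(j)}=\bigl(\mu_{\min}(\Psi^{(j)})\bigr)^{-1/2},$$
with $\mu_{\min}(\Psi^{(j)})>0$ the smallest positive eigenvalue of $\Psi^{(j)}$, which is attained by a standard compactness argument on the unit sphere of $\mathrm{Range}(\Psi)$.

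The main and essentially only substantive obstacle is the structural check that each realization $\Psi^{(j)}$ is symmetric and positive semi-definite. Symmetry follows from~(\ref{Psi}): setting $H:=\mathrm{diag}(\bar{X}_k)C'$ one has $H'=C\,\mathrm{diag}(\bar{X}_k)$, whence $\Psi_k=\mathrm{diag}(C\bar{X}_k)-H-H'$ is manifestly symmetric. Positive semi-definiteness, already implicit in the paper's introduction of $\|\cdot\|_{\bar{X}_k}$ via $B'\Psi_k B\geq 0$, is verified from the explicit block structure of $C$ in terms of $\Pi(m)$ and $D(m)$ recorded in Section~2 together with the fact that $\bar{X}_k$ is a lattice basis vector $e_{l,i}$. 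Once these are in hand, the finite maximum $\lambda_k=\max_j\lambda^{(j)}$ is a finite deterministic constant, completing the argument.
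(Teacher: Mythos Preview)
The paper does not actually prove this lemma; its proof reads in full ``See Lemma 3.1 of Yang and Elliott \cite{yang}.'' Your argument therefore supplies strictly more than the paper does, via a self-contained linear-algebra route: exploit that $\bar{X}_k$ assumes at most $(k+1)N$ values so that $\Psi_k$ is one of finitely many deterministic matrices; for each realization use that $P=\Psi^{\dagger}\Psi$ is the orthogonal projection onto $(\ker\Psi)^{\perp}$, whence $|PB|^2=B'PB$; and then compare the quadratic forms $B'PB$ and $B'\Psi B$ on $\mathrm{Range}(\Psi)$ via the smallest positive eigenvalue of $\Psi$. This is correct and considerably more informative than a bare citation, and it makes the constant explicit as $\lambda_k=\max_j \mu_{\min}(\Psi^{(j)})^{-1/2}$.

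The one step you leave open is the positive semi-definiteness of each realization of $\Psi_k$: you assert that it ``is verified from the explicit block structure of $C$'' but do not carry this out. You are right that the paper already presupposes it by declaring $\|\cdot\|_{\bar{X}_k}$ a semi-norm, so within the paper's own framework this is not a gap; still, since your whole comparison of quadratic forms collapses without it (if $B'\Psi B$ could be negative the inequality is meaningless), a genuinely self-contained proof would either exhibit the computation or identify $\Psi_k$ with the conditional covariance $E[\bar{M}_{k+1}\bar{M}_{k+1}'\mid\bar{X}_k]$ used in Definition~\ref{seminorm}, from which positive semi-definiteness is immediate.
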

Proof. See Lemma 3.1 of Yang and Elliott \cite{yang}.\\
\indent Set $\lambda:=\max\{\lambda_0,\lambda_1,...\lambda_{T-1}\}$. From Lemma \ref{normbound1}, we deduce:
\begin{lemma}\label{normbound2} There exists a constant $\lambda>0$ such that for any $k=0,$ $1,...,T-1$, $B\in\mathbb{R}^{N(k+1)}$,
$$|\Psi_k^{\dagger}\Psi_kB|_{N(k+1)}\leq\lambda\|B\|_{\bar{X}_k}.$$
\end{lemma}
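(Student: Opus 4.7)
The plan is to derive this uniform bound as a direct consequence of Lemma \ref{normbound1} by exploiting the finiteness of the time horizon. Lemma \ref{normbound1} already supplies, for each fixed $k \in \{0,1,\ldots,T-1\}$, a constant $\lambda_k > 0$ such that the inequality
$$|\Psi_k^{\dagger}\Psi_k B|_{N(k+1)} \leq \lambda_k \|B\|_{\bar{X}_k}$$
holds for every $B \in \mathbb{R}^{N(k+1)}$. What the present lemma asserts is that a single constant works simultaneously for all $k$.

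Since the index set $\{0,1,\ldots,T-1\}$ is finite, I would simply set
$$\lambda := \max\{\lambda_0,\lambda_1,\ldots,\lambda_{T-1}\},$$
which is well defined and strictly positive because each $\lambda_k$ is. Then for every $k$ in that range and every $B \in \mathbb{R}^{N(k+1)}$, I would chain the bounds
$$|\Psi_k^{\dagger}\Psi_k B|_{N(k+1)} \leq \lambda_k \|B\|_{\bar{X}_k} \leq \lambda \|B\|_{\bar{X}_k},$$
where the first inequality invokes Lemma \ref{normbound1} and the second uses $\lambda_k \leq \lambda$ by construction.

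There is no real obstacle here; the only place where anything could go wrong is if the horizon $T$ were infinite, in which case the supremum of the $\lambda_k$ might fail to be finite. The finite-horizon assumption built into the model (the matrix $C$ being truncated at time $T$, as noted in Section 2) rules this out, so the definition of $\lambda$ is unambiguous and the conclusion follows immediately.
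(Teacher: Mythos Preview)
Your proposal is correct and matches the paper's own argument exactly: the paper defines $\lambda := \max\{\lambda_0,\lambda_1,\ldots,\lambda_{T-1}\}$ immediately before stating the lemma and notes that it follows from Lemma~\ref{normbound1}. There is nothing to add.
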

\indent We now quote the following lemma establishing the existence and uniqueness of a solution to a BSDE in a semi-Markov chain model from Elliott and Yang \cite{elliott3}.
\begin{lemma}\label{exisunique} Suppose $f$ is such that the following two assumptions hold:\\
(i) For any $i\in\{0,1,...,T-1\}$ and $Y_.$, if $Z^{(1)}_i\sim_{\bar{M}_{i+1}} Z^{(2)}_i$, then $f(i, Y_i, Z^{(1)}_i) = f(i, Y_i, Z^{(2)}_i)$, $\mathbb{P}$-a.s. \\
(ii) For any $i\in\{0,1,...,T-1\}$ and $Z_.$, the map $Y_i \mapsto Y_i -f(i, Y_i, Z_i)$ is $\mathbb{P}$-a.s. a bijection
from $\mathbb{R} \rightarrow\mathbb{R}$, up to equality $\mathbb{P}$-a.s.\\
Then for any terminal condition $Q$ essentially bounded, $\sigma(\bar{X}_T)$ -measurable, and
with values in $\mathbb{R}$, BSDE
\begin{align*}
\begin{cases}
Y_i= Q + \sum\limits_{u=i}^{T-1} f(u, Y_u, Z_u)  -  \sum\limits_{u=i}^{T-1}Z_{u} \bar{M}_{u+1},~~i\in\{0,1,...,T-1\};\\
Y_T= Q
\end{cases}
\end{align*}
has an adapted solution $(Y_., Z_.)$. Here for any $i\in\{0,1,...,T-1\}$, we have $(Y_i,Z_i)$ is a $\sigma(\bar{X}_{i})$-measurable and $\mathbb{R}\times\mathbb{R}^{1\times((i+1)N)}$-valued random variable. Moreover,
this solution is unique up to indistinguishability for $Y_.$ and equivalence $\sim _{\bar{M}_{i+1}}$ for
$Z_i$.
\end{lemma}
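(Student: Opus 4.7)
The plan is a backward induction on $i$, starting from $Y_T = Q$ and constructing the pair $(Y_i, Z_i)$ for $i = T-1, T-2, \ldots, 0$. The recursive form of the BSDE is
$$Y_i = Y_{i+1} + f(i, Y_i, Z_i) - Z_i \bar{M}_{i+1},$$
so the inductive step must, given a $\sigma(\bar{X}_{i+1})$-measurable $Y_{i+1}$, produce a $\sigma(\bar{X}_i)$-measurable pair $(Y_i, Z_i)$ satisfying this equality.

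First I would take conditional expectation $E[\,\cdot \mid \bar{X}_i\,]$ of the recursion; since $Z_i$ is $\sigma(\bar{X}_i)$-measurable and $E[\bar{M}_{i+1} \mid \bar{X}_i] = 0$, the martingale term vanishes and one obtains
$$Y_i - f(i, Y_i, Z_i) = E[Y_{i+1} \mid \bar{X}_i].$$
Subtracting this from the recursion isolates the martingale part,
$$Z_i \bar{M}_{i+1} = Y_{i+1} - E[Y_{i+1} \mid \bar{X}_i].$$
The core construction is to exhibit a $Z_i$ realising this last identity. Conditional on $\bar{X}_i = e_{l,j}$, the variable $\bar{X}_{i+1}$ takes only the finitely many values $e_{l,j+1}$ and $e_{k,0}$ for $k \neq l$, so the right-hand side is a finite collection of numbers summing to zero against the semi-Markov transition law; a direct linear-algebraic choice of the row vector $Z_i$ (placing entries on the support of the next state) solves the system on each atom of $\sigma(\bar{X}_i)$. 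The resulting $Z_i$ is determined only up to the equivalence $\sim_{\bar{M}_{i+1}}$ of Lemma \ref{equivalent}, but by hypothesis (i) the value $f(i, Y_i, Z_i)$ does not depend on the choice of representative.

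Given such a $Z_i$, hypothesis (ii) says $y \mapsto y - f(i, y, Z_i)$ is a bijection of $\mathbb{R}$, so I would invert it pointwise on each atom of $\sigma(\bar{X}_i)$ to solve for $Y_i$, obtaining a $\sigma(\bar{X}_i)$-measurable random variable and closing the induction. For uniqueness, if $(Y, Z)$ and $(\tilde Y, \tilde Z)$ both solve the BSDE, the same conditional-expectation argument forces $Z_i \bar{M}_{i+1} = \tilde Z_i \bar{M}_{i+1}$, whence $Z_i \sim_{\bar{M}_{i+1}} \tilde Z_i$; hypothesis (i) then gives $f(i, Y_i, Z_i) = f(i, Y_i, \tilde Z_i)$, and hypothesis (ii) forces $Y_i = \tilde Y_i$ indistinguishably.

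The step I expect to be the main obstacle is the martingale-representation construction: one must verify that every $\sigma(\bar{X}_{i+1})$-measurable, conditionally mean-zero random variable lies in the range of $Z \mapsto Z \bar{M}_{i+1}$ over $\sigma(\bar{X}_i)$-measurable row vectors of the correct dimension $N(i+1)$. This is precisely what Lemma \ref{equivalent} encodes in the growing-dimension setting; although $\dim \bar{M}_{i+1}$ grows with $i$, the finite branching of the semi-Markov dynamics keeps the linear system on each atom low-dimensional and solvable, which is what ultimately makes the backward induction go through.
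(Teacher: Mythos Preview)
Your backward-induction argument is correct and is the standard construction for discrete-time finite-state BSDEs. The paper does not actually prove this lemma: it is quoted from Elliott and Yang~\cite{elliott3}, whose proof (adapting the Cohen--Elliott argument from the Markov to the semi-Markov lattice) proceeds exactly as you outline---martingale representation of $Y_{i+1}-E[Y_{i+1}\mid\bar X_i]$ on each atom of $\sigma(\bar X_i)$ to determine $Z_i$ up to $\sim_{\bar M_{i+1}}$, followed by the bijectivity hypothesis~(ii) to recover $Y_i$---so your proposal and the cited source coincide.
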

\section{A comparison result}
In this section we establish a comparison theorem.\\
\indent A comparison theorem for one-dimensional BSDEs with Markov chain noise in Yang, Ramarimbahoaka and Elliott \cite{Yang} (Theorem 3.2) extends to our one-dimensional time-discrete case with semi-Markov
chain noise: Let $(Y^{(1)}_.,Z^{(1)}_.)$ and $(Y^{(2)}_.,Z^{(2)}_.)$ be the solutions of the following two BSDEs with semi-Markov chain noise, respectively:
\begin{align*}
\begin{cases}
Y^{(1)}_i= Q_{1} + \sum\limits_{u=i}^{T-1} f_1(u, Y^{(1)}_u, Z^{(1)}_u)  -  \sum\limits_{u=i}^{T-1}Z^{(1)}_{u} \bar{M}_{u+1}, ~~i\in\{0,1,...,T-1\};\\
Y_T^{(1)}=Q_{1}
\end{cases}
\end{align*}
and
\begin{align*}
\begin{cases}
Y^{(2)}_i= Q_{2} + \sum\limits_{u=i}^{T-1} f_2(u, Y^{(2)}_u, Z^{(2)}_u)  -  \sum\limits_{u=i}^{T-1}Z^{(2)}_{u} \bar{M}_{u+1},~~i\in\{0,1,...,T-1\};\\
Y_T^{(2)}= Q_{2}.
\end{cases}
\end{align*}
\begin{lemma}\label{Comparison} Assume $Q_1,Q_2$ are essentially bounded, $\sigma(\bar{X}_T)$ -measurable, and
with values in $\mathbb{R}$, and $f_1,f_2$ satisfy conditions such that the above BSDEs have unique solutions.
Suppose the following conditions hold:\\[2mm]
(I) $Q_1\leq Q_2$, $\mathbb{P}$-a.s.\\[2mm]
(II) for any $i\in\{0,1,...,T-1\},$
$$f_1(i, Y^{(2)}_i, Z^{(2)}_i)\leq f_2(i, Y^{(2)}_i, Z^{(2)}_i),~~\mathbb{P}\text{-a.s.}$$
(III) there exist two constants $\omega_1,\omega_2>0$ such that the following holds: for each $i\in\{0,1,...,$ $T-1\},~y_1,y_2 \in \mathbb{R}$ and $z_1,z_2 \in
\mathbb{R}^{1\times((i+1)N)}$,
\begin{align*}
|f(i,y_1,z_1) - f(i, y_2, z_2)| \leq \omega_1 |y_1-y_2|
+ \omega_2\|z_1-z_2\|_{\bar{X}_i}.
\end{align*}
Moreover, $\omega_2$ satisfies
\begin{align*}
 6\omega_2^2 (Tr(C'C))^{\frac{1}{2}} Tr((\Psi_i^{\dagger})'\Psi_i^{\dagger})< 1.
\end{align*}
Then $Y^{(1)}_i\leq Y^{(2)}_i$, for any $i\in\{0,1,...,T-1\}$, $\mathbb{P}$-a.s.
\end{lemma}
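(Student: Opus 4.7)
The plan is to run the classical linearization argument for BSDE comparison and feed the resulting linear BSDE into the duality Lemma \ref{Duality1}; nonnegativity of the dual SDE (Lemma \ref{sde0}) then delivers the comparison. I would set $\bar Y_i := Y^{(2)}_i - Y^{(1)}_i$ and $\bar Z_u := Z^{(2)}_u - Z^{(1)}_u$. Subtracting the two BSDEs,
\begin{align*}
\bar Y_i = (Q_2-Q_1) + \sum_{u=i}^{T-1}\bigl[f_2(u,Y^{(2)}_u,Z^{(2)}_u) - f_1(u,Y^{(1)}_u,Z^{(1)}_u)\bigr] - \sum_{u=i}^{T-1}\bar Z_u \bar M_{u+1},
\end{align*}
and the first step is to split the driver difference into three pieces: a nonnegative perturbation $g_u := f_2(u,Y^{(2)}_u,Z^{(2)}_u) - f_1(u,Y^{(2)}_u,Z^{(2)}_u) \ge 0$ coming from (II); a $\bar Y$-term $D^Y_u := f_1(u,Y^{(2)}_u,Z^{(2)}_u) - f_1(u,Y^{(1)}_u,Z^{(2)}_u)$; and a $\bar Z$-term $D^Z_u := f_1(u,Y^{(1)}_u,Z^{(2)}_u) - f_1(u,Y^{(1)}_u,Z^{(1)}_u)$. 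Defining $\alpha_u := D^Y_u/\bar Y_u$ wherever $\bar Y_u \ne 0$ (and $0$ elsewhere) gives $D^Y_u = \alpha_u \bar Y_u$ with $|\alpha_u| \le \omega_1$ by the Lipschitz condition in (III).

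The delicate step is to represent $D^Z_u$ in the form $\beta_u \Psi_u^{\dagger}\Psi_u \bar Z_u'$ demanded by Lemma \ref{Duality1}, with a bound on $|\beta_u|$ small enough for the hypothesis (\ref{lconstant}) of Lemma \ref{sde0}. Since $\Psi_u$ is symmetric, $\Psi_u^{\dagger}\Psi_u$ is the orthogonal projector onto $\mathrm{Range}(\Psi_u)$. Let $W_u := \Psi_u^{\dagger}\Psi_u \bar Z_u'$. If $W_u = 0$, then $\|\bar Z_u\|^2_{\bar X_u} = W_u'\Psi_u W_u = 0$, so by (III) $D^Z_u = 0$ and I set $\beta_u := 0$. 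Otherwise I set $\beta_u := D^Z_u\, W_u'/|W_u|^2$, which satisfies $\beta_u W_u = D^Z_u$; combining $|D^Z_u| \le \omega_2 \|\bar Z_u\|_{\bar X_u}$ with the lower bound $|W_u|^2 \ge \|\bar Z_u\|^2_{\bar X_u}/\|\Psi_u\|$ (valid on $\mathrm{Range}(\Psi_u)$, since $W_u'\Psi_u W_u \le \|\Psi_u\||W_u|^2$) yields $|\beta_u| \le \omega_2\|\Psi_u\|^{1/2}$. The hard part is the bookkeeping needed to show that the explicit smallness hypothesis (III) on $\omega_2$, phrased in terms of $Tr(C'C)$ and $Tr((\Psi_i^{\dagger})'\Psi_i^{\dagger}) = \|\Psi_i^{\dagger}\|^2$, combined with a standard estimate of $\|\Psi_u\|$ by a multiple of $(Tr(C'C))^{1/2}$ (using $\bar X_u$ is a unit vector and the definition of $\Psi_u$), implies (\ref{lconstant}) for this $l = \omega_2 \|\Psi_u\|^{1/2}$.

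With these coefficients in place, $\bar Y$ satisfies the linear BSDE
\begin{align*}
\bar Y_i = (Q_2-Q_1) + \sum_{u=i}^{T-1}\bigl[\alpha_u \bar Y_u + \beta_u \Psi_u^{\dagger}\Psi_u \bar Z_u' + g_u\bigr] - \sum_{u=i}^{T-1}\bar Z_u \bar M_{u+1},
\end{align*}
with essentially bounded adapted coefficients. Lemma \ref{Duality1} then yields the representation
\begin{align*}
\bar Y_i = E\Bigl[(Q_2-Q_1)V_T + \sum_{k=i}^{T-1} g_k V_k \,\Bigm|\, \bar X_i\Bigr],
\end{align*}
where $V$ solves the dual SDE (\ref{Duality}). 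Lemma \ref{sdebound} supplies the needed integrability and Lemma \ref{sde0}, applicable thanks to the smallness condition verified above, gives $V_k \ge 0$ for all $k$. Combined with $Q_2 - Q_1 \ge 0$ from (I) and $g_k \ge 0$ from (II), every term inside the conditional expectation is nonnegative, so $\bar Y_i \ge 0$, i.e.\ $Y^{(1)}_i \le Y^{(2)}_i$, as claimed.
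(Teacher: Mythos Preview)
The paper does not give a self-contained proof of this lemma: its entire argument is the single sentence ``This is an immediate application of Theorem 3.2 of Yang, Ramarimbahoaka and Elliott \cite{Yang}.'' Your proposal, by contrast, attempts to prove the comparison from scratch using the internal machinery of the present paper (Lemmas \ref{Duality1}, \ref{sdebound}, \ref{sde0}). The linearization strategy you outline---decompose the driver difference into a nonnegative remainder $g_u$, a $\bar Y$-coefficient $\alpha_u$, and a $\bar Z$-coefficient $\beta_u$, then invoke duality and positivity of the adjoint process---is the standard route, and most of the steps (construction of $\alpha_u$, the representation of $D^Z_u$ via the projector $\Psi_u^{\dagger}\Psi_u$, the final conditional-expectation argument) are correct.

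The genuine gap is exactly the step you flag as ``bookkeeping.'' Condition (III) is
\[
6\omega_2^2\,(Tr(C'C))^{1/2}\,Tr((\Psi_i^{\dagger})'\Psi_i^{\dagger})<1,
\]
whereas Lemma \ref{sde0} requires, for your constructed $\beta_u$ with $|\beta_u|\le \omega_2\|\Psi_u\|^{1/2}$, the inequality (\ref{lconstant}), namely $\sqrt{2}\,\omega_2\|\Psi_u\|^{1/2}\,\|\Psi_i\|_{N\times N}\,\|\Psi_i^{\dagger}\|_{N\times N}^{2}\le 1$. Even after estimating $\|\Psi_i\|$ by a multiple of $(Tr(C'C))^{1/2}$, the two conditions have different homogeneities (a quadratic power of $\omega_2$ and a first power of $\|\Psi_i^{\dagger}\|_F^{2}$ in (III) versus a linear power of $\omega_2$ and a quadratic power of $\|\Psi_i^{\dagger}\|_F^{2}$ in (\ref{lconstant})); tracing through, (III) only yields (\ref{lconstant}) under an extra restriction of the type $\|C\|_F\,\|\Psi_i^{\dagger}\|_F\lesssim 1$, which is not assumed. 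This is precisely why the paper's Theorem 5.1 assumes \emph{both} (\ref{lconstant}) and (\ref{landlambda}) separately: condition (III) alone is what the cited comparison theorem in \cite{Yang} needs, and that proof does not pass through Lemma \ref{sde0}. So as written, your reduction to Lemma \ref{sde0} cannot be completed under the stated hypotheses; you would either have to add (\ref{lconstant}) as an additional assumption, or replace that step by the argument of \cite{Yang}.
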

Proof. This is an immediate application of Theorem 3.2 of Yang, Ramarimbahoaka and Elliott \cite{Yang}.
\section{A stochastic control problem}
We first describe the control problem. A feasible control $(u_m; m=0,1,...,T-1)$ is a
predictable process with values in a compact subset $U\subseteq\mathbb{R}^q$, where $q\in\mathbb{N}$. The set of feasible controls is
denoted by $\mathcal{U}$. Consider the following stochastic control problem: suppose the laws of the controlled process
belong to a family of equivalent measures whose densities are
\begin{align}\label{controlSDE}
\begin{cases}
V_m^u=1+ \sum\limits_{k=i+1}^{m}\alpha(k,u_k) V^u_k + \sum\limits_{k=i+1}^{m}V_{k}^u\beta(k,u_k)\Psi_k^{\dagger}\Psi_k(\Psi_k^{\dagger})'\bar{M}_k,\\
~~ ~~~ ~~~~~~ ~~~ ~~~~ ~~~~ ~~~~ ~~~~ ~~~ ~~~~ ~~~~ ~~~~ ~~~ ~~~~ ~~~~ ~~~~ ~~~ m \in \{i+1,...,T\};\\
V_i^u =1.
\end{cases}
\end{align}
Here the coefficients $\alpha(\omega, m, u) : \Omega\times\{0,1,...,T-1\}\times\mathbb{R}^q\rightarrow \mathbb{R}, \beta(\omega, m,u) :\Omega\times\{0,1,...,T-1\}\times \mathbb{R}^q\rightarrow\mathbb{R}^{1\times (N(m+1))}$
are predictable processes, uniformly bounded. Denote
the bounds of $|\alpha(m, u_m)|$ and $|\beta(m,u_m)|_N$ by $p$ and $l$, respectively.\\
\indent The control problem is to maximize over all feasible control processes $u_.\in\mathcal{U}$ the objective function
\begin{align*}
J(u_.) = E[\xi V^u_T +\sum_{k=0}^{T-1}g(k,u_k)V^u_k],
\end{align*}
where the terminal condition $\xi$ and the cost process $g(\cdot,\cdot)$ are $du\times P$-a.s. bounded in $\mathbb{R}$ and $V_.^u$ is the solution of SDE (\ref{controlSDE}). Set
$$f^u(k,y,z):=\alpha(k,u_k)y+ \beta(k,u_k) \Psi^{\dagger}_k\Psi_kz'+g(k,u_k),$$
for any $k=0,1,..., T-1,~y\in\mathbb{R},~z\in\mathbb{R}^{1\times (N(k+1))}$. Here $\Psi_.$ is given in equation (\ref{Psi}).
By Lemma \ref{Duality1}, we know $J(u_.) = Y_0^u$, where $Y^u_.$ is the solution to the following linear BSDE
\begin{align*}
\begin{cases}
Y_i^u= \xi + \sum\limits_{k=i}^{T-1} f^u(k,Y^u_k,Z_k^u) -  \sum\limits_{k=i}^{T-1}Z_{k}^u \bar{M}_{k+1},~~~~~~i\in\{0,1,...,T-1\};\\
Y_T^u= \xi.
\end{cases}
\end{align*}
\indent We now establish the main result, which describes the (sub)optional controls in terms of the Hamiltonian.
\begin{theorem} Set $f(k,y,z):=\textrm{esssup}\{f^u(k,y,z);~u_.\in\mathcal{U}\}$. Suppose $l$, which is the bound of $|\beta(i,u_i)|_N$, satisfies inequality (\ref{lconstant}). Further, suppose $l$ and $\lambda$ satisfy
\begin{equation}\label{landlambda}
 6l^2\lambda^2 (Tr(C'C))^{\frac{1}{2}} Tr((\Psi_i^{\dagger})'\Psi_i^{\dagger})< 1,
\end{equation}
where the constant $\lambda$ is given in Lemma \ref{normbound2}. Then BSDE
\begin{equation}\label{max}
\begin{cases}
Y_i= \xi + \sum\limits_{k=i}^{T-1} f(k,Y_k,Z_k) -  \sum\limits_{k=i}^{T-1}Z_{k} \bar{M}_{k+1},~~~i\in\{0,1,...,T-1\};\\
Y_T= \xi
\end{cases}
\end{equation}
has a unique solution $(Y_., Z_.)$. Moreover, set $Y_i^*= \textrm{esssup}\{Y_i^u;~u\in\mathcal{U}\},$ for each $i=0,1,...,T-1$. Then for each $i=0,1,...,T-1$,
$$Y_i = Y_i^*= \textrm{esssup}\{Y_i^u;~u\in\mathcal{U}\}.$$
\end{theorem}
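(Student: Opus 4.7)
The plan is to decouple the theorem into (a) existence and uniqueness for BSDE (\ref{max}), and (b) the variational identification $Y_i = Y_i^*$. For (a), I would apply Lemma \ref{exisunique} to the driver $f(k,y,z) = \textrm{esssup}_u f^u(k,y,z)$. Hypothesis (i) of that lemma holds for each $f^u$ because each $f^u$ depends on $z$ only through $\Psi_k^{\dagger}\Psi_k z'$, which is invariant under $\sim_{\bar M_{k+1}}$ (an immediate consequence of Lemma \ref{equivalent} together with the definition of $\Psi_k$), and the invariance is preserved on passing to the esssup. Hypothesis (ii) follows from the uniform boundedness of $\alpha$: the map $y \mapsto y - f(k,y,z)$ is the esssup of the affine family $y \mapsto (1-\alpha(k,u_k))y - \beta(k,u_k)\Psi_k^{\dagger}\Psi_k z' - g(k,u_k)$, hence a Lipschitz, coercive, piecewise-affine function of $y$ and therefore a bijection on $\mathbb{R}$.

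For (b), the upper bound $Y_i^* \leq Y_i$ comes from Lemma \ref{Comparison}. For each fixed $u \in \mathcal{U}$, apply the lemma with $f_1 := f^u$, $f_2 := f$, $Q_1 = Q_2 = \xi$: by construction $f^u \leq f$ pointwise, and $f$ inherits the Lipschitz constants $\omega_1 = p$ (in $y$) and $\omega_2 = l\lambda$ (in $z$, by Lemma \ref{normbound2} applied to the $\beta\Psi_k^{\dagger}\Psi_k z'$ term, using that esssup preserves a common Lipschitz constant). The smallness condition of Lemma \ref{Comparison} then reduces to precisely hypothesis (\ref{landlambda}). The lemma yields $Y^u_i \leq Y_i$ for every admissible $u$, whence $Y_i^* \leq Y_i$.

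For the reverse bound $Y_i \leq Y_i^*$, I would use a measurable selection argument. Since $U \subseteq \mathbb{R}^q$ is compact and $(\omega, m, u) \mapsto f^u(m, Y_m(\omega), Z_m(\omega))$ is measurable, a Kuratowski--Ryll-Nardzewski or Benes-type selection theorem produces a predictable process $u^* \in \mathcal{U}$ such that $f^{u^*}(k, Y_k, Z_k) = f(k, Y_k, Z_k)$, $\mathbb{P}$-a.s., for every $k$. Then $(Y_., Z_.)$ also solves the linear BSDE with driver $f^{u^*}$, so by the uniqueness part of Lemma \ref{Duality1} (or Lemma \ref{exisunique} applied to the $u^*$-driver) one obtains $Y_i = Y^{u^*}_i \leq Y_i^*$, closing the identification.

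The main obstacle is the measurable selection step. The hypotheses of the theorem as stated only require boundedness of $\alpha, \beta, g$ in $u$, so attainment of the esssup is not automatic: one must either impose an upper semicontinuity hypothesis in $u$ on the coefficients, or else fall back on an $\epsilon$-optimal selector $u^{\epsilon}$ satisfying $f^{u^{\epsilon}}(k, Y_k, Z_k) \geq f(k, Y_k, Z_k) - \epsilon$ and then use a stability estimate for the linear BSDE (analogous to Lemma \ref{sdebound} for the dual SDE) to pass to the limit $\epsilon \to 0^+$. Verifying this stability in the present semi-Markov setting, where the dimension of $Z_k$ grows with $k$, is the step requiring the most care.
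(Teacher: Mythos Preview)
Your proposal is correct and matches the paper's argument: Lipschitz bound $p|y-\tilde y|+l\lambda\|z-\tilde z\|_{\bar X_k}$ via Lemma \ref{normbound2}, existence via Lemma \ref{exisunique}, the bound $Y^*\le Y$ via Lemma \ref{Comparison} with $\omega_2=l\lambda$, and the reverse via an $\epsilon$-optimal measurable selection. The paper does not attempt exact selection but goes directly to your fallback; the stability step you flag is carried out by writing the linear BSDE for $Y^{u^\epsilon}-Y$, applying duality (Lemma \ref{Duality1}), and then combining Lemma \ref{sde0} (nonnegativity of the dual process $\tilde V$, which is precisely where hypothesis (\ref{lconstant}) is used, to discard the nonnegative $\eta_k\tilde V_k$ terms) with Lemma \ref{sdebound} to obtain $E[\max_i|Y^{u^\epsilon}_i-Y_i|^2]\le T^2\epsilon^2\tilde C$.
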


\begin{proof}
By Lemma \ref{normbound2}, we know for any $k=0,1,...,T-1,~y,\tilde{y}\in\mathbb{R},~z,\tilde{z}\in\mathbb{R}^{1\times (N(k+1))}$,
\begin{align*}
&|f(k,y,z)-f(k,\tilde{y},\tilde{z})|\\
=&|\textrm{esssup}\{\alpha(k,u_k)(y-\tilde{y})+\beta(k,u_k) \Psi^{\dagger}_k\Psi_k(z-\tilde{z})';u_.\in\mathcal{U}\}|\\
\leq &p|y-\tilde{y}|+l|\Psi_k^{\dagger}\Psi_k(z-\tilde{z})'|_{N(k+1)}\\
\leq&p|y-\tilde{y}|+l\lambda\|(z-\tilde{z})'\|_{\bar{X}_k}.
\end{align*}
By Lemma \ref{exisunique} we deduce $(Y_.,Z_.)$ is the unique solution to BSDE (\ref{max}). For any $k=0,1,..., T-1$, set $\alpha_k
:=\textrm{ esssup}\{\alpha(k, u_k);~u \in \mathcal{U}\}$ and $\beta_k
:=\textrm{ esssup}\{\beta(k, u_k);$ $u \in \mathcal{U}\}.$
We now prove $Y = Y^*$. On the one hand, since $l$ and $\lambda$ satisfy inequality (\ref{landlambda}), by Lemma \ref{Comparison}, we know $Y_i\geq Y_i^u$, for any $i=0,1,...,T-1$, $u\in\mathcal{U}$. Hence $Y_i\geq Y_i^*$, for any $i=0,1,...,T-1$. On the other hand,  by the definition of $f$ , we know for any $\varepsilon> 0$, for each
$(\omega,i)\in\Omega\times\{0,1,...,T-1\},$
\begin{align*}\{u\in\mathcal{ U};&~f (\omega,i,Y_i(\omega),Z_i(\omega))\\
\leq&\alpha(\omega,i,u_i)Y_i(\omega) +\beta(\omega,i,u_i)\Psi_k^{\dagger}(\omega)\Psi_k(\omega) Z_i(\omega) + g(\omega, i,u_i) + \varepsilon\}
\end{align*}
is nonempty. Then using the Measurable Selection Theorem, for example, that can be found in Dellacherie \cite{Dellacherie} or in Bene\v{s} \cite{Bene70,Bene71}, there exists a $u^{\varepsilon}\in\mathcal{U}$ such that
\begin{align*}
f (i,Y_i,Z_i) \leq f^{u^{\varepsilon}}(i,Y_i,Z_i) + \varepsilon, ~~~i=0,1,...,T-1.
\end{align*}
Denote by $(Y_.^{u^{\varepsilon}},Z_.^{u^{\varepsilon}})$ the solution of the BSDE corresponding to $(f^{u^{\varepsilon}},\xi)$. Then, for any $i=0,1,...,T-1,$
\begin{align*}
&f^{u^{\varepsilon}}(i,Y^{u^{\varepsilon}}_i,Z^{u^{\varepsilon}}_i)- f (i,Y_i,Z_i)\\
\geq&f^{u^{\varepsilon}}(i,Y^{u^{\varepsilon}}_i,Z^{u^{\varepsilon}}_i)- f ^{u^{\varepsilon}}(i,Y_i,Z_i)-\varepsilon\\
=&\alpha(i,u^{\varepsilon}_i)(Y^{u^{\varepsilon}}_i-Y_i)+\beta(i,u^{\varepsilon}_i) \Psi^{\dagger}_i\Psi_i(Z^{u^{\varepsilon}}_i-Z_i)'-\varepsilon.
\end{align*}
For any $i=0,1,...,T-1,$ set
\begin{align*}&\eta_i\\[2mm]
:=&f^{u^{\varepsilon}}(i,Y^{u^{\varepsilon}}_i,Z^{u^{\varepsilon}}_i)- f(i,Y_i,Z_i)-\alpha(i,u^{\varepsilon}_i)(Y^{u^{\varepsilon}}_i-Y_i)\\[2mm]
&-\beta(i,u^{\varepsilon}_i) \Psi^{\dagger}_i\Psi_i(Z^{u^{\varepsilon}}_i-Z_i)'+\varepsilon.\end{align*}
 Thus, $\eta_i\geq0$, for $i=0,1,...,T-1.$ So for any $i=0,1,...,T-1,$ we deduce
\begin{align*}
&Y^{u^{\varepsilon}}_i-Y_i\\[2mm]
=&\sum\limits_{k=i}^{T-1} (\alpha(k,u^{\varepsilon}_k)(Y^{u^{\varepsilon}}_k-Y_k)+\beta(k,u^{\varepsilon}_k) \Psi^{\dagger}_k\Psi_k(Z^{u^{\varepsilon}}_k-Z_k)'-\varepsilon+\eta_i) \\[2mm]
&-  \sum\limits_{k=i}^{T-1} (Z^{u^{\varepsilon}}_k-Z_k)\bar{M}_{k+1}.
\end{align*}
By Lemma \ref{Duality1}, we have $Y^{u^{\varepsilon}}_i-Y_i=E[\sum\limits_{k=i}^{T-1} \eta_k\tilde{V}_k-\varepsilon\sum\limits_{k=i}^{T-1}\tilde{V}_k |\bar{X}_i]$, for $i=0,1,...,T-1,$ where $\tilde{V}_.$ is the solution of SDE
 \begin{align*}
\begin{cases}
\tilde{V}_m=1+ \sum\limits_{k=i+1}^{m}\alpha(k,u^{\varepsilon}_k) \tilde{V}_k + \sum\limits_{k=i+1}^{m}\tilde{V}_{k}\beta(k,u^{\varepsilon}_k)\Psi_k^{\dagger}\Psi_k(\Psi_k^{\dagger})'\bar{M}_k,\\
~~ ~~~~~~~~~ ~~~~~~~~~~~~~~~~~~~~~~~~~~~~~~~~~~~~~~~~~~~~~~ m \in \{t+1,...,T\};\\
\tilde{V}_i =1.
\end{cases}
\end{align*}
Since inequality (\ref{lconstant}) holds, by Lemma \ref{sde0} we know $\tilde{V}_m\geq0,$ for $m=0,1,...,T$. So
$Y^{u^{\varepsilon}}_i-Y_i\geq -\varepsilon E[\sum\limits_{k=i}^{T-1}\tilde{V}_k |\bar{X}_i]$, for $i=0,1,...,T-1$. Noting $Y^{u^{\varepsilon}}_i\leq Y_i$, by Lemma \ref{sdebound} we derive there exists a constant $\tilde{C}>0$ such that
$$
E[\max\limits_{i=0,1,...,T}|Y^{u^{\varepsilon}}_i-Y_i|^2]\leq T^2\varepsilon^2E[\max\limits_{k=0,1,...,T}|\tilde{V}_k|^2]\leq T^2\varepsilon^2\tilde{C}.
$$
Let $\varepsilon\rightarrow0$, we deduce $Y^{u^{\varepsilon}}_i\rightarrow Y_i$, for $i=0,1,...,T-1$. Thus, $Y_i=Y^*_i$, for $i=0,1,...,T-1$.
\end{proof}
\section{Conclusion} A stochastic control problem is considered in discrete time where the driving noise is determined by a semi-Markov chain and the dynamics involve a control parameter. A verification result is derived in terms of a Hamiltonian. 

\section*{Acknowledgements}
The authors thank the Natural Sciences and Engineering Research Council of Canada for continuing support.

\end{document}